\newcommand{\diam}{{\rm diam}}
\newcommand{\pd}{\partial}
\newcommand{\re}{\mbox{\rm Re\,}}
\newcommand{\al}{\alpha}
\newcommand{\eps}{\varepsilon}
\newcommand{\vf}{\varphi}
\newcommand{\om}{\omega}
\newcommand{\Om}{\Omega}
\newcommand{\cB}{\mathcal B}
\newcommand{\cP}{\mathcal P}
\newcommand{\bC}{\mathbb C}
\newcommand{\bD}{\mathbb D}
\newcommand{\bR}{\mathbb R}
\newtheorem{theorem}{Theorem}[section]
\newtheorem{lemma}[theorem]{Lemma}
\newtheorem{prop}[theorem]{Proposition}
\theoremstyle{definition}
\newtheorem{remark}[theorem]{Remark}
\newtheorem{ex}[theorem]{Example}
\let\comma=,
\numberwithin{equation}{section}
\newcounter{vremennyj}
\begin{document}
\title[The boundedness of integration operator on $H^\infty(\Omega)$ ]{Uniform approximation of Bloch
functions and the boundedness of the integration operator on $H^\infty$}%
{
}
\date{\today}
\author[W.~Smith]{Wayne Smith}
\thanks{}
\address{Department of Mathematics, University of Hawaii, USA}
\email{wayne@math.hawaii.edu \textrm{(W. \ Smith)}}
\author[D.~Stolyarov]{Dmitriy M. Stolyarov}
\thanks{DS is partially supported by the RSF grant 14-41-00010; DS is grateful to Alexander Logunov for the discussion}
\address{Department of Mathematics, Michigan State University, East Lansing, MI. 48823, and Chebyshev Laboratory, St. Petersburg, Russia, and St. Petersburg Department of Steklov Mathematical Institute, St. Petersburg, Russia}
\email{dms@math.msu.edu \textrm{(D. \ Stolyarov)}}
\author[A.~Volberg]{Alexander Volberg}
\thanks{AV  is partially supported  by the Oberwolfach Institute for Mathematics, Germany, and  by the NSF grants DMS-1265549 and DMS-1600065}
\address{Department of Mathematics, Michigan Sate University, East Lansing, MI. 48823}
\email{volberg@math.msu.edu \textrm{(A.\ Volberg)}}
\makeatletter
\@namedef{subjclassname@2010}{
  \textup{2010} Mathematics Subject Classification}
\makeatother
\subjclass[2010]{42B20, 42B35, 47A30}
%
%
\keywords{}
\begin{abstract}
We obtain a necessary and sufficient condition for the operator of integration to be bounded on $H^\infty$ in a simply connected domain. The main ingredient of the proof is a new result on uniform approximation of Bloch functions.
\end{abstract}
\maketitle

\section{Introduction and statements of results}
\label{approx1}

Consider any simply connected domain $O$ in the complex plane. Fix $p\in O$ and consider the operator
of complex integration defined on $H(O)$, the set of functions analytic in~$O$:
$$
J_p f\,(z)= \int_p^z f(\zeta)\, d \zeta, \quad f\in H(O), z\in O.
$$

This operator is related to a generalized Volterra operator acting
on $H(\bD)$, where $\bD$ is the unit disk.  Let $g\in H(\bD)$ and
define the operator $T_g:H(\bD)\to H(\bD)$ by

$$
T_gf(w)=\int_0^w f(t)g'(t)\, dt, \quad f\in H(\bD),\,\,\zeta\in \bD.
$$
In the case that $g$ is univalent, the change of variable $\zeta=g(t)$ transforms the operator
$T_g$ on $H(\bD)$ to the operator $J_{g(0)}$ on $H(g(\bD))$.

The operator $T_g$ has been studied on many Banach spaces $X\subset H(\bD)$.  For such $X$, define
$$
T[X]=\{g\in H(\bD)\,:\, T_g \text{ is bounded on } X\}.
$$
C. Pommerenke's short proof of the analytic John-Nirenberg inequality
in \cite{Pom1}, based on his observation that $T[H^2]=$ BMOA, attracted considerable interest.
Subsequently, $T[X]$ has been identified for a variety of spaces $X$, including the Hardy spaces
($0<p<\infty$), Bergman spaces, and BMOA; see \cite{AC}, \cite{AS}, \cite{AS2} and \cite{SZ}.
The study of $T[H^{\infty}]$, where $H^\infty=H^\infty(\bD)$ is the usual space of bounded
analytic functions on $\bD$, was begun in \cite{AJS}.
We note also the paper \cite{NZ}, which gives sufficient conditions for the boundedness of the operators under investigation.
It is clear that
$T[H^{\infty}]\supseteqq$ BRV, the space of  functions analytic on $\bD$ with bounded radial variation
$$
\text{BRV}=\{g\in \text{H}(\bD)\,:\, \sup_\theta\int_0^1|g'(re^{i\theta})|\,dr<\infty\},
$$
and in the article \cite{AJS} it was conjectured that $T[H^{\infty}]=$ BRV.
In the case that $g$ in univalent, this becomes a conjecture about when the operator
$J_{g(0)}$ is bounded on $H^\infty(g(\bD))$.  Our main result, Theorem \ref{oper} below, confirms
this conjecture when the symbol $g$ is univalent.

Recently, a discussion between
Fedor Nazarov,  Paata Ivanisvili, Alexander Logunov, and one of the authors (D. Stolyarov), resulted in a
counterexample to the general conjecture in \cite{AJS} about when $T_g$ is bounded.  Thus it is now known
that BRV $\subsetneqq T[H^{\infty}]$.  We thank F. Nazarov,  P. Ivanisvili, and A. Logunov for permission
to include the counterexample at the end of this paper.

Recently a paper \cite{CPPR} addressing the action
 of the operator $T_g$  from a Banach space $X$ into $H^\infty$ has appeared in arXiv.
It has many interesting results, including
a characterization of when   $T_g$  is bounded on $H^\infty$:
\cite[Theorem 1.2]{CPPR}
$g\in T[H^\infty]$ if and only if $\sup_{z\in\bD}\|G_{g,z}\|_{\mathcal K}<\infty$, where
$$
\overline{G_{g,z}(w)}= \int_0^z g'(\zeta)\overline{K_\zeta(w)}\,d\zeta.
$$
Here $K_\zeta$ denotes the reproducing kernel for $H^2$ and $\mathcal K$ denotes the space of Cauchy transforms.

By the interior diameter of $O$  we understand the following quantity:
$$
\text{diam}_I \,O := \sup_{z_0,z_1\in O}\,\, \inf_{\gamma\in\Gamma(z_0,z_1)} \int_0^1 |\gamma'(t)| dt,
$$
where $\gamma\in\Gamma(z_0,z_1)$ means that $\gamma\colon [0,1]\to O$ is a smooth path with
$\gamma(0)=z_0$ and $\gamma(1)=z_1$.

\begin{theorem}
\label{oper}
Let $O$ be a simply connected domain in the plane, and let $p\in O$.
The operator $J_p$ is bounded on $H^\infty(O)$ if and only if  $\diam_I \,O<\infty$.
\end{theorem}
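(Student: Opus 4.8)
\noindent\emph{Sufficiency.} Suppose $\diam_I\,O<\infty$. For $f\in H^\infty(O)$ and $z\in O$ the integral $J_pf(z)=\int_p^z f(\zeta)\,d\zeta$ does not depend on the path of integration inside the simply connected domain $O$, so $|J_pf(z)|\le\|f\|_{H^\infty(O)}\int_0^1|\gamma'(t)|\,dt$ for every $\gamma\in\Gamma(p,z)$. Passing to the infimum over $\gamma$ bounds $|J_pf(z)|$ by $\|f\|_{H^\infty(O)}$ times the interior distance from $p$ to $z$, hence by $\|f\|_{H^\infty(O)}\,\diam_I\,O$. Thus $J_p$ is bounded with $\|J_p\|\le\diam_I\,O$.

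\noindent\emph{Necessity: reductions.} We prove the contrapositive: if $\diam_I\,O=\infty$ then $J_p$ is not bounded on $H^\infty(O)$. For this it is enough to produce, for each $N$, a function $f\in H^\infty(O)$ with $\|f\|_{H^\infty(O)}\le1$ and $\|J_pf\|_{H^\infty(O)}\ge N$, and since $O$ is simply connected it is enough to produce a real harmonic $u$ on $O$ with $u(p)=0$, $|\grad u|\le1$, and $\sup_O|u|\ge N$ (take $F=u+i\tilde u$ with $\tilde u$ the conjugate harmonic vanishing at $p$, and $f=F'$: then $\|f\|_{H^\infty(O)}=\sup_O|\grad u|\le1$ and $J_pf=F$, so $\|J_pf\|_{H^\infty(O)}\ge\sup_O|u|$). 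If $O$ is unbounded, $u=\re\bigl(e^{i\alpha}(z-p)\bigr)$ works for suitable $\alpha$ (this is $J_p$ applied to the constant $e^{i\alpha}$), so we may assume $O$ bounded. Fix a Riemann map $\phi\colon\bD\to O$ with $\phi(0)=p$; the change of variables $\zeta=\phi(t)$ identifies $J_p$ on $H^\infty(O)$ with $T_\phi$ on $H^\infty(\bD)$, with the same norm. Finally, the interior distance between any two points of $O$ is at most the sum of the lengths of the $\phi$-images of the two radii ending at their preimages, so $\diam_I\,O\le 2\sup_\theta\int_0^1|\phi'(re^{i\theta})|\,dr$; hence our hypothesis gives $\sup_\theta\int_0^1|\phi'(re^{i\theta})|\,dr=\infty$, i.e.\ $\phi\notin\mathrm{BRV}$. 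So it remains to show: if $\phi$ is univalent and $\phi\notin\mathrm{BRV}$, then $T_\phi$ is unbounded on $H^\infty(\bD)$.

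\noindent\emph{Construction of the test functions.} Fix a large $N$ and pick $\theta$ and $r_0<1$ with $L:=\int_0^{r_0}|\phi'(se^{i\theta})|\,ds\gg N$. For any contraction $h\in H^\infty(\bD)$, integrating $T_\phi h$ along the radius gives $e^{-i\theta}T_\phi h(r_0e^{i\theta})=\int_0^{r_0}h(se^{i\theta})\,\phi'(se^{i\theta})\,ds$, of modulus $\le L$; the task is to build $h$ for which it is $\gtrsim L$. This forces $h\phi'$ to be essentially phase-aligned along the radius, i.e.\ $\arg h$ should nearly cancel $\arg\phi'=\im\log\phi'$. The obstruction is that $\arg\phi'$ is an \emph{unbounded} harmonic function, so the naive fixes are unavailable ($\overline{\phi'}/|\phi'|$ is not analytic, $1/\phi'$ is not bounded). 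This is exactly where the new uniform approximation theorem for Bloch functions is used: since $\phi$ is univalent, $\log\phi'$ is a Bloch function (Koebe's distortion theorem, $\|\log\phi'\|_{\cB}\le6$), and the theorem supplies $h\in H^\infty(\bD)$ with $\|h\|_\infty\le1$ whose argument tracks $-\arg\phi'$ closely enough along a suitably chosen radius that $\re\bigl(e^{-i\theta}h(se^{i\theta})\phi'(se^{i\theta})\bigr)\gtrsim|\phi'(se^{i\theta})|$ on a subset of $[0,r_0]$ carrying a fixed proportion of the mass $|\phi'(se^{i\theta})|\,ds$, while staying nonnegative (or only mildly negative) on the rest. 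For such $h$ one gets $\re\bigl(e^{-i\theta}T_\phi h(r_0e^{i\theta})\bigr)\gtrsim L\gg N$, so $\|T_\phi\|>N$; letting $N\to\infty$ completes the proof.

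\noindent\emph{The main obstacle.} Everything rests on the Bloch approximation theorem invoked above: given a Bloch function $b$ (here $b=\log\phi'$), one must construct a bounded analytic $h$, with an absolute bound on $\|h\|_\infty$, whose argument follows $-\im b$ well enough along an appropriate radius. Since $b$ is genuinely unbounded, $b$ itself cannot be approximated in the uniform norm; one must instead use the (logarithmic) growth rate of Bloch functions and the freedom in the choice of radius, and carry out the approximation on all scales at once — presumably via a stopping-time / Carleson-box decomposition adapted to $b$. The subtle point is to guarantee that the part of the radius on which the phases fail to align contributes little, so that it does not cancel the main term; this is the heart of the matter.
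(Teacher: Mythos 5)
Your sufficiency argument is exactly the paper's. The necessity reductions are also sound: passing to a Riemann map $\phi\colon\bD\to O$, $\phi(0)=p$, identifying $J_p$ with $T_\phi$, and noting that $\diam_I O=\infty$ forces $\sup_\theta\int_0^1|\phi'(re^{i\theta})|\,dr=\infty$, all match the structure of Section 2 (the paper only sets aside $O=\mathbb{C}$ rather than reducing to bounded $O$, but your variant is fine). You also correctly identify the crux: one must build a bounded analytic $h$ whose argument tracks $-\arg\phi'$ along a radius on which $\int|\phi'|$ is large, and that $\log\phi'$ being Bloch (via Koebe) is the only structural information available.

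However, the heart of the proof is missing, and you say so yourself. The statement you invoke --- ``the new uniform approximation theorem for Bloch functions'' --- is exactly the paper's Theorem \ref{realpart}, which is the main technical content of the paper and is neither stated precisely nor proved in your proposal. What that theorem actually gives, after transferring to a sector $\Omega_\beta$ via a conformal map $\psi_\beta$, is a harmonic $u$ such that $|u(x)-\re F(x)|\le\eps$ \emph{uniformly on the whole segment} $(0,\delta(\eps)]$ (not merely on a set carrying a fixed proportion of the mass $|\phi'|\,ds$), together with a \emph{uniform} bound $|\tilde u|\le C(\eps,\al,\beta,C_F)$ on all of $\Omega_\beta$; the test function is $g=\exp(-i(u+i\tilde u))\circ\psi_\beta^{-1}$, whose $H^\infty$ norm is $\exp(C(\eps,\al,\beta,C_F))$, a fixed constant rather than $\le1$. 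The proof is a constructive $\bar\partial$-correction: reduce to a Lipschitz function on a strip via $w=e^{-z}$, approximate it by rescaled Jackson--Bernstein polynomials $\cP_k$ of degree $\lesssim m^{3/2}$, glue them with a Gaussian analytic partition of unity $e_k$, and control the tails using the Lagrange-interpolation growth bound $|P(z)|\le|16z|^d$ for polynomials bounded on $[-1,1]$. Your suggested route (a stopping-time/Carleson-box decomposition adapted to $b=\log\phi'$) is a reasonable guess but is not the paper's argument and is not carried out; without some proof of the approximation theorem the necessity direction is incomplete. To close the gap you would need to state and prove a precise version of Theorem \ref{realpart} (or Lemma \ref{fixed}), including the two nontrivial auxiliary lemmas on polynomial approximation and polynomial growth, and then run the estimate $\re\int g\phi'\gtrsim\int_{r_\beta}^1|\phi'|$ as in Section 2.
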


Our proof of Theorem \ref{oper} is based on a new result on uniform approximation of Bloch functions,
the connection being that when $g$ is univalent, $\log g'$ is a Bloch function.
Recall that a function $f\in H(\bD)$ is said to be a Bloch function if $(1-|z|)|f'(z)|$ is
bounded on $\bD$.
For the statements, we introduce notation for the usual partial differentiation operators
$$
\partial=\frac{\partial}{\partial z} =
 \frac12\left(\frac{\partial}{\partial x}-i\frac{\partial}{\partial y}\right), \quad
\bar{\partial}=\frac{\partial}{\partial \bar z} =
 \frac12\left(\frac{\partial}{\partial x}+i\frac{\partial}{\partial y}\right).
 $$
In particular, when $f$ is analytic $\partial f=f'$ and $\bar{\partial}f =0$.

Let $\Omega_{\al}^r$ denote the domain $\{z\colon |z|<r, \, \arg\, z\in (-\al/2, \al/2)\}$. We abbreviate $\Omega_\al =\Omega_\al^1$.
By $\cB(\Omega_\al^r)$ we denote the class of functions analytic in $\Om_\al^r$ and such that
\begin{equation}
\label{Bl}
|\pd F(z)| \le \frac{C_F}{|z|},\, z\in \Om_\al^r.
\end{equation}
For a harmonic function $u$ on $\Om_\beta$, denote by~$\tilde{u}$  the harmonic conjugate of~$u$
with $\tilde u (1/2) =0$.

\begin{theorem}
\label{realpart}
Let $0<\alpha<\beta<\pi$, $\eps>0$, and let $F\in \cB(\Om_\al^{1/2})$.  Then there exists a harmonic function $u$ in $\Om_\beta$ such that

\textup{1)} $|u(x)- \re F(x)|\le \eps,\, x\in (0, \delta(\eps)]$.

\textup{2)} $|\tilde u(z)| \le C(\eps,\al,\beta,C_F)<\infty$ for all $z\in \Om_{\beta}$.

\end{theorem}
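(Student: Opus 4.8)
I would argue as follows. The first move is a change of variable that simultaneously straightens the two sectors into half-strips and upgrades the Bloch bound \eqref{Bl} to a Lipschitz bound. Put $\zeta=\log z$; then $\Om_\al^{1/2}$ becomes the thin half-strip $P_\al:=\{\re\zeta<-\log 2,\ |\im\zeta|<\al/2\}$, $\Om_\beta$ becomes the wide half-strip $P_\beta:=\{\re\zeta<0,\ |\im\zeta|<\beta/2\}$, the radial segment $(0,1/2]$ becomes the deep part of the negative real axis, and the normalization point $1/2$ becomes $-\log 2$. Boundedness of a harmonic conjugate is a conformal invariant, so it is enough to find a harmonic $U$ on $P_\beta$ with $|\wt U|\le C(\eps,\al,\beta,C_F)$ and $|U(t)-\re\wt F(t)|\le\eps$ for $t\le T(\eps)$, where $\wt F(\zeta):=F(e^\zeta)$ and $t$ runs over the negative real axis. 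The gain of the substitution is that $\wt F'(\zeta)=F'(e^\zeta)e^\zeta$, so $|\wt F'|\le C_F$ on all of $P_\al$; hence $\psi:=\re\wt F$ is $C_F$-Lipschitz, it is real-analytic with all derivatives controlled away from the mouth (Cauchy estimates on a slightly narrower strip), and on each vertical cross-section of $P_\al$ it varies by at most $C_F\al$, so that $\re F$ is radial up to an additive error of size $C_F\al$.

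The task is now to approximate $\psi$ on the deep axis by the real part of a holomorphic function on $P_\beta$ with bounded imaginary part. The guiding model is $\wt F(\zeta)=c\zeta$ with $c\in\bR$, for which $U(\zeta)=\re(c\zeta)$ matches $\psi$ exactly on the axis and has conjugate bounded by $|c|\beta/2\le C_F\beta/2$. For a general $\wt F$ one decomposes the deep axis into unit intervals $I_n$, on each of which — up to an error governed by $\wt F''$ — $\wt F$ agrees with the affine function $\zeta\mapsto\wt F(c_n)+\wt F'(c_n)(\zeta-c_n)$, $c_n$ the centre of $I_n$, with $|\wt F'(c_n)|\le C_F$ and $|\wt F'(c_n)-\wt F'(c_{n+1})|$ bounded. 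Replacing every slope $\wt F'(c_n)$ by its real part $\re\wt F'(c_n)$ produces local models whose imaginary parts are bounded on $P_\beta$ by $C_F\beta/2$; the problem is to patch these scale-by-scale into a single harmonic function on $P_\beta$ without the conjugate accumulating. Here the strict inequality $\al<\beta$ enters: the collar $\{\al/2<|\im\zeta|<\beta/2\}$, on which no matching condition is imposed, serves as a buffer in which the transitions between consecutive models are carried out smoothly, so that a subsequent small harmonic correction yields an honest harmonic $U$ on $P_\beta$ whose conjugate telescopes to a bounded quantity. This is the step that genuinely uses the bounded-derivative hypothesis; a general Lipschitz profile on the axis need not admit such an extension. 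The affine replacement costs an $O(C_F)$ error in $U-\psi$ on the axis, so the $\eps$-precision is reached by a refinement of the scheme tuned to $\eps$ (working at finer scales where $\psi$ varies by less than $\eps$, at the price of a smaller $\delta(\eps)$, together with a compensating correction that preserves the conjugate bound, or equivalently by iterating the construction on the successive remainders); summing the resulting family of harmonic functions — a series whose conjugates are summable — produces $U$, and transporting it back through $\zeta=\log z$ gives the required $u$.

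I expect the main obstacle to be precisely the middle step: organizing the multi-scale patching so that $\wt U$ stays bounded while $U$ tracks $\re\wt F$ to within $\eps$. Controlling the conjugate amounts to controlling the accumulated normal-derivative flux of $U$ across the long sides of the thin strip, and the delicate point is that this flux must be absorbed by the buffer region uniformly over all scales; the tension between coarse scales (needed for the patched function to be nearly harmonic, since affine functions are harmonic) and fine scales (needed for the $\eps$-matching on the axis) is what forces the final refinement. All other ingredients — the conformal reduction, the Cauchy estimates, and the estimate $|\wt U|\le C_F\beta/2$ for the elementary models — are routine.
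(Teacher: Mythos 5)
Your opening moves coincide with the paper's: the logarithmic change of variables that converts the Bloch condition \eqref{Bl} on sectors into a Lipschitz condition on half-strips, and the recognition that the slack $\al<\beta$ provides a collar in which to maneuver. The closing mechanism you invoke is also right: a model whose slope on the axis is real has conjugate vanishing on $\bR$, hence bounded across a sub-$\pi$ opening; the paper implements this by symmetrizing $F$ and combining $\tilde u|_{\bR}=0$ with $|\nabla\tilde u|\lesssim C/|z|$. So the skeleton is correct.

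The gap is the middle step, which you yourself flag as the main obstacle --- and it is not a detail but the entire content of the proof. Passing from local real-affine models on unit intervals to a single harmonic function on $\Pi_\beta$ that tracks $\psi=\re\wt F$ within $\eps$ on the axis while keeping the conjugate bounded is where all the work happens, and ``a subsequent small harmonic correction'' and ``telescoping'' are names for the problem, not solutions. Two concrete objections. First, the harmonic extension of a piecewise-linear Lipschitz profile does not in general have bounded conjugate --- slope discontinuities produce logarithmic singularities in the conjugate --- so naively gluing affine models along the axis fails, and you do not say what replaces it. Second, your $\eps$-refinement plan (finer scales, or iteration on remainders, then ``sum a series whose conjugates are summable'') tacitly assumes the conjugate bound survives refinement uniformly; in the paper's argument the bound genuinely degrades with $\eps$, through the degree $\sim m^{3/2}$ of the approximating polynomials with $m$ chosen large depending on $\eps$ --- permitted by the statement, but incompatible with a ``summable conjugates'' picture. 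What the paper actually does in the middle step is a $\bar\partial$-correction: after reflecting and smoothing to extend $f$ Lipschitz-ly to all of $\Pi_\al$, it sets $H(x,y)=f(x+i\al y/\beta)$, a non-analytic Lipschitz function on $\Pi_\beta$ agreeing with $f$ on the axis, and then solves $\bar\partial g=\bar\partial H$ with $g$ small on the deep axis and $\partial g$ bounded, so that $h=H-g$ is the analytic approximant. The construction of $g$ rests on an analytic partition of unity built from normalized Gaussians, Jackson--Bernstein polynomial approximation of the localized pieces of $H$, and a Lagrange-interpolation growth bound for those polynomials that the Gaussian decay defeats. None of that machinery appears in your sketch; you have the right reduction and the right target, but the mechanism that makes the approximation go through is missing.
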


In Section 2, we will assume  Theorem \ref{realpart} and use it
 to prove Theorem \ref{oper}.  The proof of Theorem \ref{realpart} will be given in Section 3.
The example showing that   BRV $\subsetneqq T[H^{\infty}]$
is in Section 4.

\medskip

{\it Notation for constants.} The letter $C$ will be used throughout the paper to denote various positive constants
which may vary at each occurrence but do not depend on the essential parameters.
The dependence of  $C$ on important variables will be often indicated by placing the
variables in parentheses.
For $X$ and $Y$ nonnegative quantities, the notation $X\lesssim Y$  or $Y\gtrsim X$
means $X\le CY$ for some inessential constant $C$. Similarly,  $X\approx Y$ means that
both $X\lesssim Y$ and $Y\lesssim X$ hold.

\noindent{\bf Acknowledgments.} We already mentioned that we are grateful to P. Ivanisvili, A. Logunov and F. Nazarov for the discussion (mentioned above) that led to the first example in Section \ref{examples}.
We are  thankful to the referee for making valuable remarks that significantly improved  the exposition, in particular, we are grateful to the referee for the remark at the end of Section 3.

\section{The proof of Theorem \ref{oper}, assuming Theorem
\ref{realpart}}

In this section we assume that Theorem \ref{realpart} holds and show that Theorem \ref{oper} is a consequence.

The proof of one implication in Theorem \ref{oper} does not require Theorem \ref{realpart}.
Suppose that
diam$_I\, O<\infty$, and let $z\in O$ and $f\in H^\infty(O)$ be arbitrary. Then, for
any smooth path $\gamma:[0,1]\to O$ connecting $p$ to $z$, we have
$$
\left|J_p f\,(z)\right|=\left|\int_\gamma f(\zeta)\,d\zeta\right|\le \|f\|_\infty\int_0^1|\gamma'(t)|\,dt.
$$
Thus, taking the infimum  over all such paths $\gamma$
shows that $J_p: H^\infty(O)\to H^\infty(O)$ is  a bounded operator with
$\|J_p\|\le \text{diam}_I \,O$.

For the other implication in Theorem \ref{oper},
assume that diam$_I\, O=\infty$.  In the case that $O=\mathbb{C}$, by considering its action on
the constant function 1, it is clear that $J_p$ is unbounded on $H^\infty(O)$.  On the other
hand, if $O$ is a 
proper subset of $\mathbb C$, let
 $\vf:\bD\to O$ be a Riemann
map, which we may assume is normalized so that $\vf'(0)=1$.
Since the interior diameter is infinite,
given an integer $N$ there exists a radius, which we may assume is $[0,1)$, such that
\begin{equation}
\label{geN}
\int_0^1|\vf'(x)| \,dx\ge N\,.
\end{equation}

Consider the function
\begin{equation}
\label{f}
f(z):= e^{-i(u(z)+i\tilde u(z))}\in \mathcal B(\Om_\beta),
\end{equation}
 where  $u, \tilde u$ satisfy Theorem \ref{realpart} with $F$ chosen as follows.
Fix some $\al <\beta < \pi$.
We first denote by $\psi_\beta: \Om_\beta\to \bD$ the conformal map with $\psi_\beta(1/2) =0$ and $\psi_\beta(0) =1$.  Then $|z\psi_\beta'(z)|\approx|1-\psi_\beta(z)|$, for $z\in \Omega_\beta$  and $|z|\le 1/2$, where the
constants suppressed depend only on $\beta$.  Since also $|z\psi_\beta'(z)|\lesssim 1$, it follows that $|z\psi_\beta'(z)|\lesssim|1-\psi_\beta(z)|$, for $z\in \Omega_\beta$.
Hence restricting to $\Om_\al^{1/2}$ gives
$$
|z\psi_\beta'(z)|\lesssim1-|\psi_\beta(z)|^2, \quad z\in \Om_\al^{1/2},
$$
with constants depending only on $\al$ and $\beta$.
Now consider the composition
$$
F(z):= i \log \vf' \circ \psi_\beta (z)\,.
$$
Using the well known inequality  (see, for example, \cite[p. 9]{Pom}) that
$$
(1-|z|^2)\frac{|\vf''(z)|}{|\vf'(z)|}\le 6, \quad z\in \bD,
$$
for the  univalent function $\vf$,
it follows that the restriction to $\Om_\al^{1/2}$ of the function $F$ satisfies \eqref{Bl} (with $r=1/2$),
with constant $C_F=C(\al,\beta)$ depending only on $\al$ and $\beta$.
Thus Theorem~\ref{realpart} is applicable, and we get an approximate $\Phi= u+i\tilde u$  defined in $\Omega_\beta$.

By this theorem (with $\eps =\pi/4$)
$$
|\re F(x) - u(x)| \le \pi/4, \quad x\in (0,\delta(\pi/4)],
$$
and therefore we have that
$$
|\arg \vf'(t) - u(\psi_\beta^{-1}(t))|\le \pi/4, \quad r_\beta<t<1,
$$
where $r_\beta\in(0,1)$ and depends only on $\beta$.
From the Koebe distortion theorem and our assumption that $\vf'(0)=1$, there is a constant $C_1(\beta)$
such that
\begin{equation}
\label{lerbeta}
\int_0^{r_\beta}|\vf'(t)|\,dt\le C_1(\beta).
\end{equation}
We also have from  Theorem \ref{realpart} that
$$
|\tilde u|\le C_2(\al,\beta),\quad z\in \Om_\beta.
$$
 Therefore, if $f$ is the function from \eqref{f}, then function
$$
g:= f\circ \psi_\beta^{-1}\in H^\infty(\bD),
$$
with $\|g\|_\infty< \exp(C_2(\al,\beta))$.  We now estimate
\begin{align*}
 \re \int_{r_\beta}^1  g(x) \vf'(x) dx &= \re \int_{r_\beta}^1  e^{i (\arg \vf'(x) -u(\psi_\beta^{-1}(x))} |\vf'(x)| e^{\tilde u(\psi_\beta^{-1}(x))} dx\\
 &\ge \cos (\pi/4) e^{-C_2(\al,\beta)} \int_{r_\beta}^1 |\vf'(x)| dx \\
&\ge
 \cos (\pi/4) e^{-C_2(\al,\beta)} (N-C_1(\beta)),
\end{align*}
from \eqref{geN} and \eqref{lerbeta}.  Since also
$$
 \left| \re \int_0^{r_\beta}  g(x) \vf'(x) dx\right| \le C_1(\beta)\|g\|_\infty\le C_1(\beta)\exp(C_2(\al,\beta)),
 $$
it follows that
$$
\re \int_0^1 g(x) \vf'(x) dx\ge \cos (\pi/4) e^{-C_2(\al,\beta)} (N-C_1(\beta))-C_1(\beta)\exp(C_2(\al,\beta)).
$$
Since the integer $N$ was arbitrary, and $\|g\|_\infty< \exp(C_2(\al,\beta))$,
this means that the operator $J_p$ is unbounded on $H^\infty(O)$.
Theorem \ref{oper} is proved.

\section{The proof of Theorem \ref{realpart}}
\label{prooffixed}

We separate out the main part of the proof of
 Theorem \ref{realpart} into the following lemma.

\begin{lemma}
\label{fixed}
Let  $0<\al<\beta<\pi$, and let $\eps>0$. Given a function $F\in\cB( \Om_\al^{1/2})$, one can find analytic
$\Phi$ such that

\textup{1)} $|F(x)-\Phi(x)|\le \eps, \quad x\in (0, \delta(\eps)];$

\textup{2)} $\Phi\in\cB( \Omega_\beta)$ and $C_\Phi =C(\eps, \al,\beta, C_F)$.

\end{lemma}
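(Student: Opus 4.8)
The plan is to change variables via $w=\log z$, which turns the Bloch bound \eqref{Bl} into an ordinary Lipschitz estimate and the two sectors into two half-strips, and then to build $\Phi$ as a band-limited interpolation of the trace of $F$ on the positive real axis. Since $\beta<\pi$, a single-valued branch of $\log$ is analytic on $\Om_\beta$; it maps $\Om_\beta$ onto the half-strip $\Sigma_\beta:=\{\re w<0,\ |\im w|<\beta/2\}$ and $\Om_\al^{1/2}$ onto $\{\re w<\log\tfrac12,\ |\im w|<\al/2\}$. Put $G(w):=F(e^w)$; then \eqref{Bl} reads $|G'(w)|=|e^wF'(e^w)|\le C_F$, so $G$ is $C_F$-Lipschitz along $(-\infty,\log\tfrac12)$, and after subtracting the constant $F(1/4)$ from $F$ (which changes neither $C_\Phi$ nor the left side of part~1) I may assume $G(-\log4)=0$, so $|G(t)|\le C_F(|t|+2)$ on $(-\infty,\log\tfrac12)$. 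It then suffices to find $\Psi$ analytic on $\Sigma_\beta$ with $|\Psi'|\le C(\eps,\beta,C_F)$ on $\Sigma_\beta$ and $|\Psi(t)-G(t)|\le\eps$ for real $t\le-M(\eps)$, since $\Phi:=\Psi\circ\log$ then satisfies the Lemma with $\delta(\eps):=e^{-M(\eps)}$. (In fact only the trace of $F$ on $(0,1/2)$ is used, which is why $\al$ will not enter the final constant.)

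\emph{The interpolation.} Fix $\widehat\phi\in C_c^\infty((-1,1))$ with $\widehat\phi(0)=1$ and let $\phi$ be its inverse Fourier transform. Then $\phi$ is entire, $\phi$ and $\phi'$ obey $|\phi(x+iy)|+|\phi'(x+iy)|\le C_N(1+|y|)^Ne^{2\pi|y|}(1+|x|)^{-N}$ for every $N$, and by Poisson summation (which survives for complex argument by the identity theorem) $\sum_{n\in\bZ}\phi(a+n)\equiv1$ and $\sum_{n\in\bZ}\phi'(a+n)\equiv0$. With $h:=\eps/(C_*C_F)$ ($C_*$ a large absolute constant) and nodes $x_n:=-\log4-nh$, $n\ge0$, I set
\[
\Psi(w):=\sum_{n\ge0}G(x_n)\,\phi\bigl(h^{-1}(w-x_n)\bigr),\qquad w\in\Sigma_\beta.
\]
The at-most-linear growth of $G(x_n)$ and the rapid decay of $\phi$ make this series converge locally uniformly on $\Sigma_\beta$, so $\Psi$ is analytic there.

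\emph{The two estimates.} For real $t\le-M$, I write $G(t)=G(t)\sum_{n\in\bZ}\phi(h^{-1}(t-x_n))$ and subtract. The block $\sum_{n\ge0}\bigl(G(x_n)-G(t)\bigr)\phi(h^{-1}(t-x_n))$ is dominated by $C_F\sum_{n\ge0}|x_n-t|\,|\phi(h^{-1}(t-x_n))|=C_Fh\sum_{n\ge0}|k_n|\,|\phi(k_n)|$ with $k_n:=h^{-1}(t-x_n)$, hence by $C_Fh\cdot C\le\eps$ once $C_*$ is chosen large. The block $-G(t)\sum_{n<0}\phi(h^{-1}(t-x_n))$ is small because every node with $n<0$ lies to the right of $-\log4$, at distance $\gtrsim|t|$ from $t$, so that sum is $O(|t|^{1-N})$, and multiplied by $|G(t)|\lesssim C_F|t|$ it is $\le\eps$ provided $M\ge M(\eps,C_F)$. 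For the derivative, given $w\in\Sigma_\beta$ I pick a real $s_0$ with $|s_0-\re w|\le\log4$ (namely $s_0=\min(\re w,-\log4)$) and write $G(x_n)=G(s_0)+\int_{s_0}^{x_n}G'$; using $\sum_n\phi'(\cdot-n)\equiv0$, the term $h^{-1}G(s_0)\sum_{n\ge0}\phi'(h^{-1}(w-x_n))$ equals $-h^{-1}G(s_0)\sum_{n<0}\phi'(h^{-1}(w-x_n))$, which is controlled because either $G(s_0)=0$ (when $\re w>-\log4$) or the missing nodes sit at distance $\gtrsim|\re w|$ from $\re w$; and the remaining term $h^{-1}\sum_{n\ge0}\bigl(\int_{s_0}^{x_n}G'\bigr)\phi'(h^{-1}(w-x_n))$ is bounded by $C_F\sum_{n\ge0}h^{-1}|x_n-s_0|\,|\phi'(h^{-1}(w-x_n))|$, which converges since $|x_n-s_0|\le h\,|h^{-1}(w-x_n)|+\log4$ and $\phi'$ decays rapidly. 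All bounds depend only on $\eps,\beta,C_F$; the $\beta$-dependence enters solely through the factor $e^{2\pi|\im w|/h}\le e^{\pi\beta C_*C_F/\eps}$ produced by the width of the strip.

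\emph{The main obstacle.} The delicate point is the derivative bound: since $G$, and hence $\Psi$, may grow linearly (take $F=\log z$), a term-by-term estimate of $\Psi'$ diverges, and one must exploit the cancellation $\sum_n\phi'(\cdot-n)\equiv0$ — equivalently, the fact that this interpolation reproduces affine functions exactly — so that only the $O(h)$ increments of $G$ on the scale of the kernel contribute. It is for this reason that $C_\Phi$ must be allowed to blow up as $\eps\to0$ (through $h^{-1}\sim\eps^{-1}$), and why no conformal reparametrization widening the sector can work: any such map distorts the logarithmic scale and destroys the approximation near the vertex.
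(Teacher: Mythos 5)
Your proof is correct, and it takes a genuinely different route from the paper's. You and the paper both begin the same way: a logarithmic change of variable turns the Bloch bound $|F'(z)|\le C_F/|z|$ on the sector into a Lipschitz bound on a half-strip. After that the constructions diverge. The paper extends the Lipschitz trace by reflection, stretches the imaginary variable to define a non-analytic $H$ on the wider strip $\Pi_\beta$ that matches $f$ on $\bR_+$, and then solves a $\bar\partial$-problem: a Gaussian analytic partition of unity together with local Chebyshev (Jackson--Bernstein) polynomial approximations produce a correction $g$ with $\bar\partial g=\bar\partial H$, $g$ small on the reals, and $\pd g$ bounded, so $h=H-g$ works. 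You instead build the analytic approximant directly as a band-limited (Paley--Wiener) interpolation of the trace $G(x_n)$ at equally spaced nodes on the boundary ray, with a kernel $\phi$ whose Fourier transform is a compactly supported bump; the Poisson-summation identities $\sum_n\phi(\fdot+n)\equiv1$ and $\sum_n\phi'(\fdot+n)\equiv0$ are precisely the analogues, in your framework, of the paper's partition-of-unity identities, and the second one supplies the crucial cancellation for the derivative bound (you correctly flag this as the main obstacle, since $G$ grows linearly). Your construction is more explicit and avoids the $\bar\partial$ machinery and the polynomial-approximation lemma entirely, and it makes transparent that only the trace of $F$ on $(0,1/2)$ is used, so $C_\Phi$ need not depend on $\al$. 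The paper's $\bar\partial$ scheme is a more flexible tool: as the Remark at the end of Section 3 notes, it can be adapted to give uniform approximation on a whole sub-sector $\Om_{\al'}\cap\{|w|\le\delta\}$, a stronger conclusion that your sampling argument would need further work (e.g.\ propagating the real-axis approximation using the derivative bound, or sampling in two variables) to reproduce. For the lemma as stated, both proofs are complete; yours is shorter and more elementary.
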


\subsection{The proof of Theorem \ref{realpart}, assuming Lemma \ref{fixed}}
\label{prooffixed2}

Given $F=U+iV\in \cB(\Om_\al^{1/2})$,  consider its symmetrization $F^*(z)= (F(z) + \overline{F(\bar{z})})/2
= U^*(z)+iV^*(z)$. It obviously belongs to $\cB(\Om_\al^{1/2})$  as well,
and we apply Lemma \ref{fixed}
 to it to obtain a
function (let us call it) $\Phi^*$ which satisfies the derivative estimate in a larger domain $\Om_\beta$.
Moreover, we may assume $\Phi^*$ is symmetric in the sense
$\Phi^*(z)= (\Phi(z) + \overline{\Phi(\bar{z})})/2$ for some~$\Phi$ satisfying the same bounds.
 Let $\Phi^*(z):= u(z) + i\tilde u(z)$.
Then $V^*(x)=0$, $x\in \bR\cap \Om_\al^{1/2}$,  $\tilde u(x)=0$, $x\in \bR\cap \Om_\beta$ , so we have
$$
|U(x) - u(x) |=|U^*(x)-u(x)|= |F^*(x) - \Phi^*(x) |\le \eps,\quad x\in (0, \delta(\eps)].
$$
We now use that
$$
|\nabla \tilde u(z)| \approx |\pd \Phi^*(z)| \le \frac{C_1}{|z|},\quad z\in \Om_\beta,
$$
in conjunction with
$$
\tilde u (x)=0,\, x\in \bR\cap \Om_\beta,
$$
to conclude that
\begin{equation}
\label{bdd}
|\tilde u(z)|=|\tilde u(z)-\tilde u (x)|\le |y|\frac{C_2}{|z|}  \le  C_2, \quad z=x+iy\in \Om_{\beta}.
\end{equation}

We have deduced Theorem \ref{realpart} from  Lemma \ref{fixed}.
\medskip

Next, we present two lemmas that  will be used in our proof of
Lemma \ref{fixed}.  While these two lemmas are certainly well known to experts, we include the proofs
since we do not know good references.

\begin{lemma}
\label{JB}
Let $\vf$ on $I_0:=[-1, 1]$ have Lipschitz norm $N$.  Then $\vf$ can be approximated by polynomials of degree $N^{3/2}$ with the error at
most $cN^{-1/2}$.
\end{lemma}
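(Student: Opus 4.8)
The plan is to transfer the problem to periodic (trigonometric) approximation, where it becomes an instance of Jackson's theorem for Lipschitz functions, and to carry out the classical smoothing-kernel construction by hand so as to keep track of the degree. We may assume $N$ is large, since for bounded $N$ the constant $\vf(0)$ already approximates $\vf$ on $I_0$ within $N\le cN^{-1/2}$.

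First I would set $g(\theta):=\vf(\cos\theta)$. This function is $2\pi$-periodic and even, and, because $|\cos\theta_1-\cos\theta_2|\le|\theta_1-\theta_2|$, it is Lipschitz on $\bR$ with constant at most $N$. The reason for this substitution is that an even trigonometric polynomial $\sum_{k=0}^{n}a_k\cos k\theta$ equals $\sum_{k=0}^{n}a_k\,T_k(\cos\theta)$, with $T_k$ the $k$-th Chebyshev polynomial; hence under $x=\cos\theta$ it corresponds to an algebraic polynomial of degree $\le n$ on $I_0$, and the two uniform errors coincide exactly. So it suffices to approximate $g$ uniformly, within $cN^{-1/2}$, by an even trigonometric polynomial of degree $n\approx N^{3/2}$.

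For that I would use the Jackson (squared Fej\'er) kernel: for an integer $m\ge 1$ set
$$
K_m(t):=\kappa_m\Big(\frac{\sin(mt/2)}{\sin(t/2)}\Big)^{4},\qquad \frac1{2\pi}\int_{-\pi}^{\pi}K_m(t)\,dt=1,
$$
with $\kappa_m>0$ fixed by the normalization. Then $K_m\ge 0$, it is even, and since $(\sin(mt/2)/\sin(t/2))^{2}=m\,F_{m-1}(t)$ with $F_{m-1}$ the Fej\'er kernel (a trigonometric polynomial of degree $m-1$), $K_m$ is a trigonometric polynomial of degree $\le 2(m-1)$. The two quantitative facts I would need are $\kappa_m\approx m^{-3}$, which follows from $\frac1{2\pi}\int_{-\pi}^{\pi}(\sin(mt/2)/\sin(t/2))^{4}\,dt=m^{2}\cdot\frac1{2\pi}\int_{-\pi}^{\pi}F_{m-1}(t)^{2}\,dt\approx m^{3}$, and the first-moment estimate $\frac1{2\pi}\int_{-\pi}^{\pi}|t|\,K_m(t)\,dt\le C/m$. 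The latter comes from the elementary bound $(\sin(mt/2)/\sin(t/2))^{4}\le\min\{Cm^{4},\,C|t|^{-4}\}$ on $[-\pi,\pi]$ (use $|\sin(t/2)|\ge|t|/\pi$ together with $|\sin(mt/2)|\le\min\{1,m|t|/2\}$), by splitting $\int|t|(\cdots)\,dt$ at $|t|=1/m$: each of the two pieces is $\lesssim m^{2}$, so the moment is $\lesssim\kappa_m m^{2}\approx m^{-1}$.

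Finally I would take $T:=g\ast K_m$, convolution on the circle: it is even (a convolution of two even functions) and a trigonometric polynomial of degree $\le 2(m-1)$, and
$$
\|g-T\|_\infty\le\sup_{\theta}\frac1{2\pi}\int_{-\pi}^{\pi}|g(\theta)-g(\theta-t)|\,K_m(t)\,dt\le N\cdot\frac1{2\pi}\int_{-\pi}^{\pi}|t|\,K_m(t)\,dt\le\frac{CN}{m}.
$$
Choosing $m=\lfloor N^{3/2}/2\rfloor+1$ gives $2(m-1)\le N^{3/2}$ and $CN/m\le cN^{-1/2}$; pulling $T$ back through the Chebyshev substitution $x=\cos\theta$ produces the required polynomial. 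The only genuine computation in all of this is the kernel estimate $\int|t|K_m(t)\,dt\lesssim 1/m$ — that is, pinning down $\kappa_m\approx m^{-3}$ and doing the two-piece split — and that is where whatever difficulty there is lies; the rest is bookkeeping of degrees.
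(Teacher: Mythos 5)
Your proof is correct and follows the same route as the paper: substitute $x=\cos\theta$ to reduce to trigonometric approximation of the even Lipschitz function $g(\theta)=\vf(\cos\theta)$, then pull back through the Chebyshev polynomials. The only difference is that the paper cites the Jackson--Bernstein theorem as a black box to get a trigonometric polynomial of degree $K$ with error $A\,\omega_\vf(1/K)$, whereas you unpack it by constructing the Jackson (squared Fej\'er) kernel and verifying the first-moment bound directly --- a correct and self-contained expansion of the step the paper leaves to a reference.
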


\begin{proof}
Given $\theta \in [-\pi, \pi]$ we introduce the new function $\Phi(e^{i\theta})= \vf (\cos \theta)$.
Since $|\cos\theta_1 -\cos\theta_2|\le | e^{i\theta_1}- e^{i\theta_2}|$,
 the modulus of continuity of
$\Phi$ is not greater than the modulus of continuity $\omega_\vf$ of $\vf$.
Therefore, using the Jackson--Bernstein theorem we can find a trigonometric polynomial $S(e^{i\theta})$
of degree $K$ such that
$$
|\Phi(e^{i\theta})- S(e^{i\theta})|\le A \om_\vf(1/K).
$$
Notice that $\Phi(e^{i\theta})$ is even by construction, so $S$ can be just a linear combination of $\cos k\theta$, $k=0,\dots, K$. Now we substitute $\theta= \arccos x$, and get the combination  $P_K$ of Chebyshev polynomials $T_k(x)= \cos (k\arccos x)$, $k= 0, \dots, K$, such that
$$
|\vf(x)- P_K(x) |\le A \om_\vf(1/K).
$$
Applying this inequality to a Lipschitz function $\vf$ with Lipschitz constant at most $N$ and with $K= N^{3/2}$
completes the proof.
\end{proof}

\begin{lemma}
\label{pol}
Let a polynomial $P$ of degree $d$ satisfy $|P(x)|\le 1$, for $x\in I_0:=[-1, 1]$.
Then \textup{1)}~$P$ satisfies the uniform estimate~$|P(z)| \leq |16z|^d$ when~$|z| \geq 1$\textup{;} \textup{2)} $P'$ satisfies the same estimate with a slightly bigger constant in place of~$16$.
\end{lemma}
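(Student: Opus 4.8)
The plan is to establish part~1) through the Chebyshev expansion of $P$, and then to deduce part~2) from part~1) by Cauchy's integral formula; only elementary facts are used.

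For part~1), I would expand $P(x)=\sum_{k=0}^{d}c_k T_k(x)$ in the Chebyshev polynomials $T_k(x)=\cos(k\arccos x)$. After the substitution $x=\cos\theta$, the coefficients $c_k$ are the Fourier cosine coefficients of the $2\pi$-periodic even function $\theta\mapsto P(\cos\theta)$, so the hypothesis $|P|\le1$ on $I_0$ immediately yields $|c_k|\le 2$ for every $k$. Next I would use the identity $T_k(z)=\tfrac12\big((z+\sqrt{z^{2}-1})^{k}+(z-\sqrt{z^{2}-1})^{k}\big)$, valid for every $z\in\bC$ and either branch of the root. For $|z|\ge1$ one has $|z^{2}-1|\le|z|^{2}+1\le2|z|^{2}$, hence $|z\pm\sqrt{z^{2}-1}|\le(1+\sqrt2)|z|<\tfrac52|z|$, and therefore $|T_k(z)|\le(\tfrac52|z|)^{k}\le(\tfrac52|z|)^{d}$ whenever $k\le d$ and $|z|\ge1$. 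Summing over $k$ gives $|P(z)|\le 2(d+1)\,(\tfrac52|z|)^{d}$ for $|z|\ge1$. It remains to absorb the factor $2(d+1)$ into the base: by Bernoulli's inequality $(32/5)^{d}\ge1+\tfrac{27}{5}d\ge 2(d+1)$ for $d\ge1$ (the case $d=0$ being trivial), whence $2(d+1)(\tfrac52|z|)^{d}\le\big(\tfrac{32}{5}\cdot\tfrac52\,|z|\big)^{d}=(16|z|)^{d}$, which is~1).

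For part~2), the case $d=0$ is trivial, so suppose $d\ge1$ and fix $z$ with $|z|\ge1$. Combining~1) (applied on $|w|=1$) with the maximum principle gives $|P(w)|\le(16\max(1,|w|))^{d}$ for every $w\in\bC$. Apply Cauchy's formula on the circle $|w-z|=|z|$:
$$
P'(z)=\frac{1}{2\pi i}\int_{|w-z|=|z|}\frac{P(w)}{(w-z)^{2}}\,dw .
$$
On that circle $|w|\le|w-z|+|z|=2|z|$, so $|P(w)|\le(32|z|)^{d}$ there, and estimating the integral by its length times the supremum of the integrand yields $|P'(z)|\le |z|\cdot(32|z|)^{d}/|z|^{2}=(32|z|)^{d}/|z|\le(32|z|)^{d}$, which is the estimate of~1) with the constant $32$ in place of $16$.

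No step presents a genuine difficulty; the only thing to watch is the bookkeeping of constants, specifically checking that the polynomial factor $2(d+1)$ produced by the $d+1$ Chebyshev coefficients is dominated by the geometric gain obtained from enlarging the constant $5/2$ to $16$. If one is willing to invoke Markov's inequality $\|P'\|_{I_0}\le d^{2}$, part~2) can alternatively be obtained by applying~1) to $P'/d^{2}$ (a polynomial of degree $d-1$ bounded by $1$ on $I_0$), which again produces the constant $32$.
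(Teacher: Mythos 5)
Your proof is correct, and both of your arguments for part~2) hold up. But the route is genuinely different from the paper's. The paper writes~$P$ via Lagrange interpolation at the~$d+1$ equally spaced nodes~$\{j/d\}_{j=0}^d\subset I_0$; the hypothesis~$|P|\le 1$ on~$I_0$ bounds the nodal values, each of the~$d$ linear factors in the numerator is bounded by~$2|z|$, and the denominators~$\prod_{i\ne j}|j-i|/d = j!(d-j)!/d^d$ are summed via the binomial theorem and the elementary bound~$d^d\le 4^d d!$ to produce the final~$(16|z|)^d$. For the derivative the paper simply differentiates the Lagrange formula and reruns the same bookkeeping. You instead expand~$P$ in Chebyshev polynomials, bound the coefficients by observing that they are Fourier cosine coefficients of~$P(\cos\theta)$, bound~$|T_k(z)|$ by~$(\tfrac52|z|)^k$ from the closed form~$T_k(z)=\tfrac12\big((z+\sqrt{z^2-1})^k+(z-\sqrt{z^2-1})^k\big)$, and then absorb the polynomial prefactor~$2(d+1)$ into the base of the exponential. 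What the paper's approach buys is that it is entirely self-contained and requires only the Lagrange formula and the binomial theorem, with no appeal to Fourier theory or the Joukowski substitution; the derivative bound is also uniform in method (differentiate and re-estimate). What your approach buys is conceptual transparency --- the Chebyshev expansion is the natural object to control growth of polynomials bounded on~$[-1,1]$ off the interval --- and your treatment of part~2) via Cauchy's formula (or via Markov's inequality applied to~$P'/d^2$) is cleaner than differentiating the interpolation formula term by term. Either way the constant is~$32$ for the derivative, matching the paper's ``slightly bigger constant.''
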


\begin{proof}
We use the Lagrange interpolation formula
\begin{equation}\label{Lagrange}
P(z) = \sum\limits_{j=0}^d P\Big(\frac{j}{d}\Big)\frac{\prod_{i\ne j}(z-\frac{i}{d})}{\prod_{i\ne j}(\frac{j-i}{d})}
\end{equation}
to establish the inequality (for~$|z| > 1$)
\begin{equation*}
|P(z)| \leq |2z|^d\Big(\sum\limits_{j=0}^d \frac{d^d}{j!(d-j)!}\Big) = |2x|^d \frac{(2d)^d}{d!} \leq |16 z|^d
\end{equation*}
since~$d^d < 4^d d!$. The estimate for the derivative can be proved in the same manner after one differentiates~\eqref{Lagrange}.

\end{proof}

\subsection{The proof of Lemma \ref{fixed}}
\label{prooffixed}

Let us change the variable: use $w$ for the variable in $\Om_\beta$ and put $w=e^{-z}$, where
$$
z\in \Pi_\beta:=\{ z=x+iy: x>0, |y|<\beta/2\}.
$$
The same change of variable relates $\Pi_\al^{\log 2} $ to $\Om_\al^{1/2}$:
$$
z\in \Pi_\al^{\log 2} :=\{ z=x+iy: x> \log 2, |y|<\al/2\}.
$$
  Condition \eqref{Bl} for $F$ becomes the following condition for $f(z):= F(e^{-z})$ in $\Pi_\al^{\log 2}$:
\begin{equation}
\label{BlP}
|\pd f(z)| \le C_1,\, z\in \Pi_\al^{\log 2},
\end{equation}
which is precisely the Lipschitz assumption on a function $f$ analytic in $\Pi_\al^{\log 2}$.

We need to find  {\it analytic} $h\in \Pi_\beta$ such that for some large  number $\Delta(\eps)$
\begin{equation}
\label{h}
|f(x)-h(x)| \le \eps, \,\,x\ge \Delta(\eps)\quad\text{and}\quad |\pd h(z)| \le C_2,\, z\in \Pi_\beta.
\end{equation}

We begin with a Lipschitz  extension of $f$ from $\Pi_\al^{\log 2}$ into $\Pi_\al$.
For example we can extend $f$ by symmetry with respect to the vertical line $x=\log 2$.
Namely,  $f$ extends by \eqref{BlP} to be continuous on the closure of ${\Pi_\al^{\log 2}}$ and then,
given $z=x+iy, 0<x<\log 2,$ we define $z^*= (2\log 2 -x)+iy$ and put
$$
f^*(z)=\begin{cases} f(z^*), \, z=x+iy, 0<x<\log 2, \, z\in \Pi_\al;\\
f(z), \, \,z=x+iy, x\ge \log 2, \,z\in \overline{\Pi_\al}\,.
\end{cases}
$$
It is easy to see that the new function $f^*$ is a Lipschitz function in the whole strip $\Pi_\al$, and it extends the analytic function $f$ defined on $\Pi_\al^{\log 2}$.
Then $f^*$ is not differentiable at the points
$z=\log 2+iy\in \Pi_\al$, but a standard smoothing of $f^*$ will have bounded gradient on $\Pi_\al$
and be an extension of the restriction of $f$ to $\Pi_\al^{1} :=\{ z\in \Pi_\al: \re z> 1\}$.
Below the symbol $f$ denotes this extension.

Consider now
$$
H(x, y):= f\big(x+ i \frac{\al}{\beta} y\big), \, x+iy \in \Pi_\beta.
$$

It satisfies
$$
|\nabla H(z)|\le C_3,\, \, z \in \Pi_\beta \quad\text{and}\quad  |f(x)- H(x)|=0,\,\, x\ge  0,
$$
but it is not analytic.

We claim that there is a function $g$ such that
\begin{equation}
\label{gH}
\bar{\pd} g =\bar{\pd} H;\text{ and } |g(x)| \le \eps, x\ge \Delta(\eps);
\text{ and } |\pd g(z)| \le C_4,\, z\in \Pi_\beta.
\end{equation}

The existence of such a function $g$ will complete the proof of Lemma \ref{fixed}.  Indeed, setting
$$
h:= H-g,
$$
we have for $x\ge \Delta(\eps)$ that
$$
|f(x)- h(x)|= |H(x)- (H(x)- g(x))|=|g(x)|\le \eps.
$$
Also $h$ is analytic: $ \bar{\pd} h = \bar{\pd} H -\bar{\pd} g =0$. Moreover
$$
|\pd h |= |\pd  H- \pd g| \le C_3+ C_4\,.
$$
This will establish that  (\ref{h}) holds, and thus complete the proof of Lemma \ref{fixed}.
Hence it suffices to construct a function $g$ that satisfies (\ref{gH}).

\subsection{Analytic partition of unity}

There exists a number $b>0$ such that in $\Pi_\beta$ the function
$$
w(z)=\sum_{k=0}^\infty e^{-(z/b -k)^2}
$$
is uniformly bounded  away from zero in absolute value. In fact, $\sum_{k=0}^\infty e^{-(z -k)^2}$ is
bounded away from zero on $\bR_+$ and its derivative obviously is uniformly bounded in a fixed thin strip around $\bR_+$.
Then in a smaller but fixed strip it is uniformly bounded away from zero in absolute value. Thus, with
$b$ chosen to be sufficiently large, $w$  will  be uniformly bounded
away from zero in absolute value on $\Pi_\beta$.
We now introduce the notation
$$
 e_k(z)=  e^{-(z/b -k)^2}/ w(z)\, ,
$$
and note that
\begin{equation}
\label{modul}
\sum_{k=0}^\infty e_k(z) =1 \quad\text{and}\quad\sum_{k=0}^\infty |e_k(z)| \le C_5,\quad z\in\Pi_\beta\,.
\end{equation}

\subsection{The first modification of $H$}

As a step toward \eqref{gH}, let us first modify $H$ to $H_0$ in $\Pi_\beta$ in such a way that $\bar{\pd} H_0 =\bar{\pd} H$, but that also
\begin{equation}
\label{bddH}
|\nabla H_0(z)|\le C_6,\,\,|H_0(z) |\le C_7\,\quad z\in\Pi_\beta\,.
\end{equation}
Here is the formula for $H_0(z)$:
$$
H_0(z) = \sum_{k=0}^\infty e_k(z) (H(z)- H(k))\,.
$$
The Lipschitz property of $H$ (remember that $|\nabla H|\le C_3 $ in $\Pi_\beta$) and
\eqref{modul} prove that $H_0$ is bounded, and of course $\bar{\pd} H=\bar{\pd} H_0$.
We also have that
\begin{equation}
\label{modul2}
\sum_{k=0}^\infty |\pd e_k(z)| \le C_7,\quad z\in\Pi_\beta\,,
\end{equation}
which can be used  to estimate $|\nabla H_0|$ in the same way \eqref{modul} was used to estimate $| H_0|$.

\subsection{The second step of the modification of $H$, from $H_0$ to $g$}

Re-writing (\ref{gH}) in terms of $H_0$, we need to find $g$ such that
\begin{equation}
\label{gH0}
\bar{\pd} g =\bar{\pd} H_0; \text{ and } |g(x)| \le \eps, x\ge \Delta(\epsilon); \text{ and } |\pd g(z)| \le C_4,\, z\in \Pi_\beta.
\end{equation}

Let $m$ be a large integer to be fixed later.  Consider functions
$$
h_{k, m} (t):= H_0(btm+ bk), \,\, t\in[-1,1],
$$
where $k$ and $m$ are integers and $b$ is the parameter from the partition of unity introduced in
section 3.3.
These are functions on the interval $I_0:= [-1,1]$ with Lipschitz norm bounded by $Cbm$, where $C=C_6$ from \eqref{bddH}.

We now apply Lemma \ref{JB} to the  functions $h_{k, m}$ defined above to get polynomials $\cP_k$ of degree
bounded by  $ (Cbm)^{3/2}:=\lambda m^{3/2}$ such that
\begin{equation}
\label{a1}
|\cP_k(t) -  h_{k, m}(t) |\lesssim m^{-1/2}, \quad t\in I_0,
\end{equation}
which translates to
\begin{equation}
\label{a2}
 \Big|\cP_k\big(\frac{x-bk}{bm}\big) -  H_0(x)\Big|\lesssim m^{-1/2},
\end{equation}
whenever $|x-bk|\le bm$.

We can now give the formula for the function $g$ that will satisfy (\ref{gH0}):
$$
g(z):= \sum_{j=0}^\infty e_j(z) \Big(H_0(z) - \cP_j\big(\frac{z-bj}{bm}\big)\Big).
$$
From (\ref{modul})  it is clear that $\bar{\pd} g =\bar{\pd} H_0$, so it remains to estimate
 1)  $|g(x)|$ when $x\in \bR$ is large, and 2) $|\pd g(z)|$ when $z\in \Pi_\beta$.

\medskip

Fix $x_0>0$ and let $k_0$ be the integer such that $|x_0- bk_0|\le b$.
We split the sum in the definition of $g(x_0)$ into three parts:

$$
\Sigma_1:= \sum_{j: |j-k_0| \le m-10} e_j(x_0) \Big(H_0(x_0) - \cP_j\big(\frac{x_0-bj}{bm}\big)\Big);
$$
$$
\Sigma_2:= \sum_{j: j-k_0 \ge m-9} e_j(x_0) \Big(H_0(x_0) - \cP_j\big(\frac{x_0-bj}{bm}\big)\Big);
$$
$$
\Sigma_3:= \sum_{j\ge 0: k_0-j \ge m-9} e_j(x_0) \Big(H_0(x_0) - \cP_j\big(\frac{x_0-bj}{bm}\big)\Big).
$$
For the indices $j$ occurring in $\Sigma_1$, we have
$$
\frac{|x_0 - bj|}{bm} \le \frac{|x_0-bk_0 |}{bm}+ \frac{|bk_0 - bj|}{bm}
\le\frac1m+ \left(1- \frac{10}{m}\right)  \le 1.
$$
Hence \eqref{a2} applies to each term in $\Sigma_1$. As the sum $\sum_{j\ge 0} e_j(z)$ converges absolutely in our strip, we get that
\begin{equation}
\label{s1}
\left| \Sigma_1 \right|\le Cm^{-1/2}.
\end{equation}

To estimate $\Sigma_2$ and $ \Sigma_3$ we need the following estimate of $\cP_r$ and $\cP'_r$, $r\ge 0$:
\begin{equation}
\label{Pr}
|\cP_r(z)|+|\cP'_r(z)|\le (C|z|)^{\lambda m^{3/2}}, \quad |z|\ge 1.
\end{equation}
Here the constant  $C$ is independent of $r\ge 0$.
This follows from Lemma \ref{pol} and \eqref{a1}, since $H_0$ is bounded.

Next, notice that the part of $\Sigma_2$, $\Sigma_2':= \sum_{j: j-k_0 \ge m-9} e_j(x_0) H_0(x_0)$ is obviously small if $m$ is large. In fact, $|H_0|\le C_7$ from (\ref{bddH}), and so
\begin{align*}
|\Sigma_2'|&\le C_7\sum_{j: j-k_0 \ge m-9} |e_j(x_0)|
\lesssim C_7 \sum_{j: j-k_0 \ge m-9} e^{-\frac{|x_0- bj|^2}{b^2}}\\
&\le C\sum_{j: j-k_0 \ge m-9} e^{-\frac{|bk_0- bj|^2}{b^2}}= C\sum_{j: j-k_0 \ge m-9} e^{-|k_0- j|^2}\,
\end{align*}
which converges to zero as $m\to\infty$ by convergence of the series.
To estimate the other part of $\Sigma_2$, namely $\Sigma_2'':= \sum_{j: j-k_0 \ge m-9} e_j(x_0) \cP_j(\frac{x_0-bj}{bm})$,
we notice that for the indices involved, we have (as $|x_0-bk_0|\le b$)
$$
\frac{|x_0-bj|}{bm}\le m^{-1}(1+ |j-k_0|)\le 1+ |j-k_0|.
$$
Therefore,
from \eqref{Pr} we get
\begin{equation}
\label{cPj}
\Big|\cP_j\big(\frac{x_0-bj}{bm}\big)\Big| \le (C+C|j-k_0|)^{\lambda m^{3/2}}.
\end{equation}
For later use, we note that the derivative can be estimated in the same way, using \eqref{Pr}:
\begin{equation}
\label{diffcPj}
\Big|\cP'_j\big(\frac{z-bj}{bm}\big)\Big| \le C(\beta)b m (C+C|j-k_0|)^{\lambda m^{3/2}},
\quad |z-x_0| \le \beta.
\end{equation}

Hence,
\begin{align*}
|\Sigma_2''|&\le \sum_{j: j-k_0 \ge m-9} |e_j(x_0)| |\cP_j(\frac{x_0-bj}{bm})| \le C \sum_{j: j-k_0 \ge m-9} e^{-\frac{|x_0- bj|^2}{b^2}}|\cP_j(\frac{x_0-bj}{bm})| \\
& \lesssim
 \sum_{j: j-k_0 \ge m-9}\!\!\!\!\!\!(C+ C|j-k_0|)^{\lambda m^{3/2}} e^{-\frac{|bk_0- bj|^2}{b^2}}\\
 &= \sum_{j: j-k_0 \ge m-9}\!\!\!\!\!\! (C+ C|j-k_0|)^{\lambda m^{3/2}}  e^{-|k_0- j|^2}.
\end{align*}

This is small if $m$ is chosen to be large, and combined with the previous estimate for $|\Sigma_2'|$
 we get that $|\Sigma_2|\to0$ as $m\to\infty$.
 Notice that the same argument shows that $\Sigma_3$ is small when $m$ is large. Combining these estimates with \eqref{s1}, we obtain
\begin{equation}
\label{small_g}
 |g(x)| =|\Sigma_1+\Sigma_2+\Sigma_3|\le\eps, \quad\text{whenever}\quad x\ge bm\,,
\end{equation}
where $\eps=\eps(m)\to 0$ as $m\to \infty$.  This is the required estimate for $|g(x)|$.


\subsection{The estimate of  $\pd g$}
\label{deriv}
It remains to estimate $\pd g$.
The terms in $\pd g$ with $\frac{w'(z)}{w^2(z)}$ are estimated precisely as before,
since $\frac{w'(z)}{w^2(z)}$ is bounded on $\Pi_\beta$.
The terms with $(z/b-k) e^{-(z/b-k)^2}$ can be estimated along verbatim the same lines as before.

What is left, is to estimate
$$
\sum_{k=0}^\infty e_k(z) (\pd H_0(z) - \frac{1}{bm}\cP'_k(\frac{z-bk}{bm})).
$$
From the estimate $|\nabla H_0|\le C_6$ and the absolute convergence of $\sum e_k$, we need only to prove
$$
\sum_{k=0}^\infty |e_k(z)| |\cP'_k(\frac{z-bk}{bm})|\le C,\quad z\in \Pi_\beta.
$$

Let $z= x_0 +iy, |y|\le \beta/2$, and $k_0$ an integer with $|x_0-bk_0|\le b$.
Using \eqref{diffcPj}, we can estimate
$$
\sum_{k=0}^{\infty} |e_k(z)| |\cP'_k(\frac{z-bk}{bm})|
\lesssim
C(\beta) bm\sum_{k=0}^{\infty} |e_k(bk_0)| (C+ C|k-k_0|)^{\lambda m^{3/2}}.
$$
Since $|e_k(bk_0)|\le C \exp(-|k-k_0|^2)$,
this sum is clearly bounded by some $C(m, \beta, b)<\infty$.

This completes the proof of the estimate $|\pd g(z)|\le C(m,\beta,b)$, $z\in \Pi_\beta$.
Hence (\ref{gH0}) has been established,  the proof of the lemma is complete.

\begin{remark} In fact, one can modify the $\bar\pd $ proofs of this section to get a better claim. Namely, one can obtain the following statement by modifying the proofs above.
Let $0<\alpha'  <\alpha<\beta<\pi/2$, $\eps>0$, and $0<\delta<1/2$. Given a function
$F \in \mathcal B(\Omega_\alpha^{1/2})$ one can find analytic $\Phi$ such that
1) $|F(z)-\Phi(z)|\le\varepsilon$, $z\in \Omega_{\alpha'} \cap \{w\in \bC : |w|\le\delta\}$; 2) $\Phi \in \mathcal B(\Omega_\beta)$ and $C_\Phi = C(\varepsilon,\delta,\alpha',\alpha,\beta,C_F)$.
\end{remark}

We are grateful to the referee for telling us that this better result was available.

\section{Examples: the proof that BRV $\subsetneqq T[H^\infty]$}
\label{examples}

We now present the example showing that BRV $\subsetneqq T[H^\infty]$.  Again, we thank
F. Nazarov,  P. Ivanisvili, and A. Logunov for permission to include it here.

\begin{prop}\label{MainEx}
There exists an analytic function~$g\colon \mathbb{D} \to \mathbb{C}$ such that the operator~$T_g$ is bounded on~$H^{\infty}(\mathbb{D})$ and
\begin{equation*}
\int\limits_{-1}^0 |g'(z)|\,dz = +\infty,
\end{equation*}
where integration is along the radius~$(-1,0]$.
\end{prop}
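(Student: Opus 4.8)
The plan is to produce $g$ meeting a convenient \emph{sufficient} condition for $T_g$ to be bounded on $H^\infty(\bD)$, namely the natural analogue of the easy half of Theorem~\ref{oper}: if
$$
\Lambda(g):=\sup_{w\in\bD}\ \inf_{\gamma}\ \int_\gamma|g'(\zeta)|\,|d\zeta|<\infty,
$$
where $\gamma$ ranges over smooth paths in $\bD$ from $0$ to $w$, then $T_g$ is bounded on $H^\infty(\bD)$ with $\|T_g\|\le\Lambda(g)$. Indeed $fg'$ is analytic, so by Cauchy's theorem $\int_0^w fg'=\int_\gamma fg'$ for every such $\gamma$, whence $|T_gf(w)|\le\|f\|_\infty\int_\gamma|g'|$; one then takes the infimum over $\gamma$ and the supremum over $w$. (Such a $g$ automatically lies in $H^\infty$, as it must since $T_g\1=g-g(0)$.) So it suffices to build an analytic $g\colon\bD\to\bC$ with $\Lambda(g)<\infty$ yet $\int_{-1}^0|g'(x)|\,dx=\infty$; the key point is that infinite radial \emph{variation} along the single radius $(-1,0]$ is entirely compatible with $\Lambda(g)<\infty$, because the cheapest path to a point of $\bD$ need not run along that radius.

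I would concentrate the ``activity'' of $g$ in a sequence of pairwise well separated pieces $B_k$ collapsing to the boundary point $-1$, each straddling the radius. Near $-1$ introduce a holomorphic coordinate $\sigma$ flattening a neighbourhood of $-1$ onto a half-strip (as with the substitution $w=e^{-z}$ in the proof of Lemma~\ref{fixed}), normalised so that the radius $(-1,0]$ becomes $\{\sigma\in[0,\infty)\}$, and let $B_k$ be the $z$-image of a thin rectangle $\{\re\sigma\in[j_k,j_k+1]\}$ with $j_k\uparrow\infty$ quickly. A natural model to try is the \emph{non-analytic} function
$$
\widetilde g(z)=\sum_{k\ge1}\eta_k(z)\,e^{i\lambda_k\sigma(z)},
$$
with $\eta_k$ a smooth cut-off equal to $1$ on the bulk of $B_k$ and vanishing off $B_k$, and with $\lambda_k\uparrow\infty$. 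With the vertical size of the $B_k$ and the parameters $\lambda_k,j_k$ chosen appropriately, one wants $\widetilde g$ to stay bounded on $\bD$, one wants $|\widetilde g'(x)|\approx\lambda_k$ for $x$ on the radius inside the bulk of $B_k$ — so that $\int_{(-1,0]\cap B_k}|\widetilde g'|\gtrsim\lambda_k$ and hence $\int_{-1}^0|\widetilde g'|=\infty$ — and one wants $\bar\partial\widetilde g=\sum_k(\bar\partial\eta_k)e^{i\lambda_k\sigma}$, which is supported only in the transition layers of the $\eta_k$, to be under control there.

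The next step is the correction: find $v$ with $\bar\partial v=\bar\partial\widetilde g$ and set $g:=\widetilde g-v$, which is analytic. Rather than the global Cauchy transform, I would solve the $\bar\partial$-equation \emph{locally}, piece by piece, exactly as in the proof of Lemma~\ref{fixed} — a holomorphic partition of unity together with local polynomial corrections, estimated through Lemmas~\ref{JB} and~\ref{pol}. One wants a $v$ that is bounded on $\bD$, that does not appreciably shrink $|\widetilde g'|$ on the radius inside the $B_k$ (so $\int_{-1}^0|g'|=\infty$ survives), and that is regular enough to let one estimate $\int_\gamma|g'|$ along well chosen paths. Granting this, $\Lambda(g)<\infty$ would follow by a routing argument: given $w\in\bD$, run $\gamma$ from $0$ to $w$ keeping off every $B_k$ except possibly the one nearest $w$; off the pieces the integrand is governed by $|v'|$ together with the rapidly decaying tails of the oscillations, and these sum to a finite quantity because the pieces are far apart in the $\sigma$-coordinate and hence extremely small in $z$-size; and a single controlled descent into one $B_k$ to reach $w$ costs $O(1)$ uniformly in $k$, since $w$ can be approached transversally to the radius, where $e^{i\lambda_k\sigma}$ decays and the cost is at most $\int_0^{\infty}\lambda_k e^{-\lambda_k t}\,dt=1$.

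The main obstacle is the $\bar\partial$-correction itself: one must manufacture $v$ that is \emph{simultaneously} bounded on $\bD$, harmless to the oscillation of $g'$ along the radius, and harmless to the cheap-path estimate near the pieces $B_k$. This is exactly the delicate point — it is why the example needed the discussion with Nazarov, Ivanisvili and Logunov — and it runs parallel to the careful local analysis in the proof of Lemma~\ref{fixed}: the tension is the same one that makes that lemma nontrivial, namely reconciling a prescribed oscillatory behaviour on a boundary-bound segment with analyticity and a derivative bound in a larger region. A secondary bookkeeping point is to ensure that the large contributions to $\int_{-1}^0|g'|$ coming from different pieces genuinely accumulate rather than cancel; taking $\lambda_k\uparrow\infty$ fast and the pieces far apart makes each $B_k$ see essentially only its own term, so the contributions simply add.
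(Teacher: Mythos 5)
Your plan rests on a correct observation: if one writes
$\Lambda(g)=\sup_{w}\inf_\gamma\int_\gamma|g'|$, then $\Lambda(g)<\infty$ does
force $T_g$ to be bounded on $H^\infty(\bD)$, by exactly the Cauchy-theorem argument you give; and, since the
Gehring--Hayman comparison which collapses $\Lambda(g)<\infty$ to BRV is a
univalent phenomenon, there is no obstruction of principle to $\Lambda(g)<\infty$
while $\int_{-1}^0|g'|=\infty$. Your picture of well-separated oscillatory
``activity'' accumulating at $-1$, seen in the half-strip coordinate, is also
the right picture; the paper works in exactly that coordinate.

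But the proposal then takes a wrong turn, and it is precisely where you yourself
flag ``the delicate point.'' You localize the oscillations with compactly
supported cut-offs $\eta_k$ and then try to repair analyticity with a
$\bar\partial$-correction $v$. That correction has to do three incompatible-looking
jobs at once --- stay uniformly bounded on the whole strip, leave $|g'|\approx\lambda_k$
on the radius inside each $B_k$, and have $\int_\gamma|v'|$ summably small along
your cheap detour paths, whose lengths in the $\sigma$-coordinate are unbounded.
Lemma~\ref{fixed} gives a bounded $\Phi$ with a bounded derivative, but a
\emph{bounded} derivative is far too weak here: a constant times an infinite
path length is infinite, so you need $|v'|$ to be \emph{small} where the paths
run, not merely bounded, and the $\bar\partial$-machinery of Section~3 gives you no
such decay. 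So the proposal, as written, is incomplete at its crux.

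The paper sidesteps this entirely: the analytic localization is built in
from the start. Instead of cut-offs, the pieces are Gaussian-weighted
oscillations
$a_k e^{i\lambda_k(\xi-\zeta_k)}e^{-(\xi-\zeta_k)^2}$, each entire, with
$\sum|a_k|=\infty$, $\sum|a_k|/\lambda_k<\infty$, and $|a_k|e^{2\lambda_k}\lesssim1$,
and with $\zeta_k\to-\infty$ fast. No $\bar\partial$-equation is solved. The
boundedness of $T_g$ is then proved by a mechanism genuinely different from your
routing: integrating \emph{along the bad path} (the real axis) and doing a single
integration by parts against $e^{i\lambda_k\xi}$, which produces a factor
$1/\lambda_k$; the derivative that appears is $h'$, controlled by
$\|h\|_{H^\infty}$ through Cauchy's estimate at unit distance from the boundary.
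So the paper exploits oscillatory cancellation together with the
\emph{analyticity of the test function}, whereas your $\Lambda(g)<\infty$
criterion uses only $\|h\|_\infty$ and must therefore pay for the avoidance
geometrically.

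If you replace your $\eta_k e^{i\lambda_k\sigma}$ by the paper's Gaussian pieces,
your routing argument does close: a path that rises to height $t\in(0,\pi/2)$
while passing $\zeta_k$ sees the $k$-th term attenuated to $O(|a_k|e^{-\lambda_k t})$,
which is summable in $k$ since $|a_k|\lesssim e^{-2\lambda_k}$, and the final
descent to a point at $\zeta_k$ costs $O(|a_k|/\lambda_k)$, which is where
$\sum|a_k|/\lambda_k<\infty$ is used. That would give a legitimate, and somewhat
more geometric, alternative verification. As it stands, though, the gap in
your argument is the unresolved $\bar\partial$-correction, and the fix is not to
solve that $\bar\partial$-problem but to avoid posing it.
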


It will be more convenient to work with another domain. Let $\mathbb{D}_1$ be the disk with center $1$ and
radius 1, and  denote by $\log$ the branch of the logarithm on
$\mathbb{D}_1$ that preserves the real numbers. Then
\begin{equation*}
z\in\mathbb{D}_1 \quad \Leftrightarrow \quad \zeta \in \Omega = \Big\{(x,y) \in\mathbb{R}^2\,:\; x < \log(2\cos y), y \in \Big(\frac{\pi}{2},\frac{\pi}{2}\Big)\Big\}
\end{equation*}
if~$\zeta = \log z$.

\begin{prop}\label{StOmega}
There exists an analytic function~$f\in H^{\infty}(\Omega)$ such that
\begin{equation}\label{CondOnf}
\int\limits_{-\infty}^{0}|f(\xi)|\,d\xi = \infty, \quad \hbox{but}\quad \bigg|\int\limits_{\zeta}^{0}f(\xi) h(\xi)\,d\xi\bigg| \lesssim \|h\|_{H^{\infty}(\Omega)}
\end{equation}
for  all $\zeta $ real and negative, and  for all $h \in H^{\infty}(\Omega)$.
\end{prop}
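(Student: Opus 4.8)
\medskip
\noindent\textbf{Plan of proof.}
I would begin with an integration by parts reduction. The point is that the ray $(-\infty,0]$ lies at distance bounded below from $\partial\Omega$ (the distance in fact tends to $\pi/2$ as $\xi\to-\infty$, since $\Omega$ is asymptotic to the strip $\{|\im\zeta|<\pi/2\}$ as $\re\zeta\to-\infty$), so the Cauchy estimate gives $|h'(\xi)|\lesssim\|h\|_{H^\infty(\Omega)}$ for every $h\in H^\infty(\Omega)$ and every $\xi\le 0$. Hence, if $f=\Psi'$ with $\Psi$ analytic on $\Omega$, $\Psi$ bounded on $(-\infty,0]$, $\Psi(\xi)\to 0$ as $\xi\to-\infty$, and $\int_{-\infty}^{0}|\Psi(\xi)|\,d\xi<\infty$, then
$$
\int_{\zeta}^{0}f(\xi)h(\xi)\,d\xi=\Psi(0)h(0)-\Psi(\zeta)h(\zeta)-\int_{\zeta}^{0}\Psi(\xi)h'(\xi)\,d\xi,
$$
and the right-hand side is $\lesssim\|h\|_{H^\infty(\Omega)}$ termwise (the last term because $\|h'\|_{L^\infty(-\infty,0]}\lesssim\|h\|_{H^\infty(\Omega)}$ and $\Psi\in L^1(-\infty,0)$). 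Thus it suffices to build $f\in H^\infty(\Omega)$ possessing such a primitive while $\int_{-\infty}^{0}|f(\xi)|\,d\xi=\infty$; equivalently, $\Psi$ should tend to $0$ and be integrable on the ray yet have infinite total variation there.

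For the construction I would work in the conformal model $\mathbb{D}_1$ (or $\mathbb{D}$), where the prime end at $\xi=-\infty$ becomes a boundary point, and seek $f$ as a product of two bounded analytic factors: a slowly decaying one — something comparable to $\zeta/(\zeta^{2}+(\pi/2)^{2})$, which is bounded on $\Omega$ but not integrable on the ray — and an oscillating one, an inner function for $\Omega$ (for instance a Blaschke product whose zeros cluster at that distinguished boundary point). The clustering profile of the zeros (or the singular mass) would be tuned so that the block increments $a_{k}:=\int_{I_{k}}f$ over the intervals $I_{k}$ between successive sign changes of $f$ have a sign--magnitude pattern for which the tails $\sum_{k>n}a_{k}$ are summable against the lengths $|I_{k}|$; one then verifies that $f$ is bounded on all of $\Omega$ (which limits the admissible locations of the zeros), that $\int_{-\infty}^{0}|f|=\infty$, and that the primitive $\Psi(\zeta)=\int_{0}^{\zeta}f-\lim_{\xi\to-\infty}\int_{0}^{\xi}f$ has the three properties required above.

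The main obstacle is exactly this last construction: reconciling $\int_{-\infty}^{0}|f|=\infty$ with integrability of the primitive on the ray. This is genuinely rigid, because a bounded analytic function on $\Omega$ oscillates on the ray only at a bounded rate (the ray being interior), so the needed tail estimate for the $a_{k}$ cannot hold if $|a_{k}|$ is eventually monotone — by Abel summation — and the naive candidates (a Blaschke product by itself; a decaying factor times a fixed-frequency oscillation) all fail. Carrying the construction through therefore demands a careful interplay between the amplitude pattern and the winding of the oscillating factor; and one should be prepared that verifying the second inequality of \eqref{CondOnf} for \emph{all} $h\in H^\infty(\Omega)$ — in particular the "resonant" test functions such as the analytic reflections of the factors of $f$ — may have to go not through the termwise bound above but through a duality/Cauchy-transform estimate for the measure $f\mathbf{1}_{[\zeta,0]}\,d\xi$. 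Establishing those estimates is the bulk of the argument.
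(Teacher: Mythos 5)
Your reduction is the right observation and it is in fact the engine of the paper's proof: since $(-\infty,0]$ is at distance $\gtrsim 1$ from $\partial\Omega$, the Cauchy estimate gives $|h'(\xi)|\lesssim\|h\|_{H^\infty(\Omega)}$ on the ray, and one then integrates by parts to trade $f$ for its primitive against $h'$. But you have stated a \emph{sufficient} condition for the conclusion (existence of a single bounded, integrable, vanishing primitive $\Psi$ on $\Omega$) and then openly left the construction of such an $f$ as the ``main obstacle'' --- so what you have is a reduction, not a proof. The construction is precisely where the content lies, and your proposed route (an inner/Blaschch-type oscillating factor times a slowly decaying factor, analyzed block by block with possible recourse to duality) is not the one that works; you yourself flag the difficulties.

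The paper takes a more concrete and in a sense more flexible route. It builds $f$ as a sum of oscillating Gaussian bumps,
$$
f(\xi)=\sum_{k\ge 0} a_k\, e^{i\lambda_k(\xi-\zeta_k)}\,e^{-(\xi-\zeta_k)^2},
$$
with $\zeta_k=-2^k$, $\lambda_k=\tfrac12(\log k+\log\log k)$, $a_k=e^{-2\lambda_k}=1/(k\log k)$. The three design conditions are $\sum|a_k|=\infty$ (forces $\int|f|=\infty$), $\sum|a_k|/\lambda_k<\infty$, and $|a_k|e^{2\lambda_k}\lesssim1$ (keeps $f$ bounded on $\Omega$, since $|e^{i\lambda_k\eta}|\le e^{\pi\lambda_k/2}$ there). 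The point is that the integration by parts is performed \emph{term by term}: each bump carries its own oscillation $e^{i\lambda_k(\xi-\zeta_k)}$, and integrating by parts against that phase gives a factor $1/\lambda_k$, after which the boundary terms and the integral of $e^{i\lambda_k(\xi-\zeta_k)}\bigl(e^{-(\xi-\zeta_k)^2}h(\xi)\bigr)'$ are each $\lesssim\|h\|_\infty/\lambda_k$, using the very estimate $|h'|\lesssim\|h\|_\infty$ you identified; summing over $k$ uses $\sum|a_k|/\lambda_k<\infty$. This does not require a globally $L^1$ antiderivative of $f$ on the ray --- and indeed your stronger hypothesis ``$\Psi\in L^1(-\infty,0]$'' is not needed and is not obviously satisfied by the paper's $f$ --- nor does it require any duality or Cauchy-transform argument for ``resonant'' $h$: once the frequencies $\lambda_k$ are built into the construction, the termwise bound already closes. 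In short, the reduction you wrote down is sound and parallel to the paper's, but the decisive step --- the explicit choice of $f$ that makes $\int|f|=\infty$ coexist with a summable sequence of termwise integration-by-parts gains --- is missing from your proposal.
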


\begin{proof}[Proof of Proposition~\textup{\ref{MainEx}} assuming Proposition~\textup{\ref{StOmega}}:]
We claim that the function
$$
g(z) = \int_1^z \frac{f(\log w)}{w}\,dw, \quad w\in \mathbb{D}_1,
$$
is almost the one described in Proposition~\ref{MainEx}
(the only difference is that its domain is~$\mathbb{D}_1$). First, its variation along the radial segment~$(0,1]$ is infinite:
\begin{equation*}
\int\limits_{0}^1 |g'(r)|dr = \int\limits_{0}^1 \Big|\frac{f(\log r)}{r}\Big|dr = \int_{-\infty}^0|f(\xi)|\,d\xi = +\infty.
\end{equation*}
Second, for any function~$\tilde{h} \in H^{\infty}(\mathbb{D}_1)$,
\begin{equation*}
-T_g[\tilde{h}](z) = \int\limits_{z}^1 \tilde{h}(s)g'(s)\,ds = \int\limits_{\zeta}^0 \tilde{h}(e^{\xi})f(\xi)\,d\xi,\quad \zeta = \log z.
\end{equation*}
We did not specify the curve of integration in the line above because the integral is path independent.
Since $|\zeta|<\log2 + \pi/2$ when $\re \zeta \geq 0$,
it is clear that the  integral above is bounded by~$C\|f\|_{H^{\infty}}\|\tilde{h}\|_{H^{\infty}}$
in this case.

For the case that $\re \zeta<0$, we pick a specific contour: starting at~$\zeta \in \Omega$, we
integrate first along the vertical segment~$v_{\zeta} = [\zeta,\re \zeta]$, and then integrate along
the horizontal segment~$[\re \zeta, 0]$. This leads to the bound (let~$\tilde{h}(e^{\xi})$ be simply~$h(\xi)$)
\begin{align*}
\big|T_g[\tilde{h}](z)\big| &\leq \bigg|\int\limits_{v_{\zeta}} h(\xi)f(\xi)\,d\xi\bigg| + \bigg|\int\limits_{\re \zeta}^0 h(\xi)f(\xi)\,d\xi\bigg| \\
&\lesssim \|f\|_{H^{\infty}}\|h\|_{H^{\infty}} + \|h\|_{H^{\infty}} \lesssim \|\tilde{h}\|_{H^{\infty}}.
\end{align*}
To bound the first summand we used that the
length of $v_{\zeta}$ is at most ${\pi}/{2}$; the bound for the second summand came from
Proposition~\textup{\ref{StOmega}}.
Now we only have to shift~$\mathbb{D}_1$ to transfer~$g$ to~$\mathbb{D}$.
\end{proof}

\begin{proof}[Proof of Proposition~\textup{\ref{StOmega}}]
The function~$f$ will be given by the formula
\begin{equation}\label{FormulaForf}
f(\xi) = \sum\limits_{k=0}^{\infty} a_k e^{i\lambda_k(\xi - \zeta_k)}e^{-(\xi - \zeta_k)^2},
\end{equation}
where the sequence~$\{\lambda_k\}_k$ is real-valued and tends to~$+\infty$, and the sequence~$\{\zeta_k\}_k$ is real-valued and tends rapidly to~$-\infty$. We require that the following conditions are satisfied:
\begin{equation}\label{SumAk}
\sum\limits_{k=0}^{\infty}|a_k| = +\infty;
\end{equation}
\begin{equation}\label{FiniteSum}
\sum\limits_{k=0}^{\infty}\frac{|a_k|}{\lambda_k} < +\infty;
\end{equation}
\begin{equation}\label{Product}
|a_k|e^{2\lambda_k} \lesssim\ 1.
\end{equation}
For example, we may take~$a_k = e^{-2\lambda_k}$,~$\lambda_k = \frac12(\log k + \log\log k)$. With this choice of~$\lambda_k$ and~$a_k$, we may take~$\zeta_k = -2^k$. We will work with this particular choice of the sequences, however, the main properties we will use are~\eqref{SumAk}, \eqref{FiniteSum}, \eqref{Product}, and the fact that~$\{\zeta_k\}_k$ decreases sufficiently fast.

First, the function~$f$ is uniformly bounded on $\Omega$:
\begin{equation*}
|f(\xi)|\leq\Big|\sum\limits_{k=0}^{\infty} a_k e^{i\lambda_k(\xi - \zeta_k)}e^{-(\xi - \zeta_k)^2}\Big|
{\lesssim} \sum\limits_{k=0}^{\infty}|a_k|e^{\frac{\pi}{2}\lambda_k}\left|e^{-(\xi -\zeta_k)^2}\right|\lesssim 1,
\end{equation*}
where \eqref{Product} was used to get the last estimate.

Next, we prove the first part of~\eqref{CondOnf}.

By~\eqref{SumAk}, it suffices to show that
\begin{equation*}
|f(\xi)| \gtrsim |a_k|,\quad \xi \in [\zeta_k-1,\zeta_k + 1],
\end{equation*}
provided~$k$ is sufficiently large. This is easy: for $\xi \in [\zeta_k-1,\zeta_k + 1]$,
\begin{equation*}
\begin{aligned}
|f(\xi)| &\geq  \frac{|a_k|}{e^2} - \sum\limits_{l < k}|a_l|e^{-(\zeta_k - \zeta_{l} + 1)^2} - \sum\limits_{l > k}|a_l|e^{-(\zeta_l - \zeta_k + 1)^2}\\& \geq
\frac{|a_k|}{e^2} - e^{-2^k}\sum\limits_{l < k}\frac{1}{l\log l} - \sum\limits_{l > k}\frac{1}{l\log l}e^{-(2^l - 2^k+1)^2} \gtrsim \frac{1}{k\log k} = |a_k|.
\end{aligned}
\end{equation*}

Last, we prove the inequality in~\eqref{CondOnf}.
Let $\zeta\in(-\infty,0)$.
Due to~\eqref{FiniteSum}, it suffices to prove that
\begin{equation*}
\bigg|\int\limits_{\zeta}^{0}e^{i\lambda_k(\xi - \zeta_k)}e^{-(\xi - \zeta_k)^2} h(\xi)\,d\xi\bigg| \lesssim \frac{\|h\|_{H^{\infty}(\Omega)}}{\lambda_k}.
\end{equation*}
Integration by parts gives
\begin{align*}
\int\limits_{\zeta}^{0}e^{i\lambda_k(\xi - \zeta_k)}e^{-(\xi - \zeta_k)^2} h(\xi)\,d\xi = &\frac{1}{i\lambda_k}e^{i\lambda_k(\xi - \zeta_k)}e^{-(\xi - \zeta_k)^2} h(\xi)\bigg|_{\xi = \zeta}^0 \\
   &- \frac{1}{i\lambda_k}\int\limits_{\zeta}^0 e^{i\lambda_k(\xi - \zeta_k)}\Big(e^{-(\xi - \zeta_k)^2} h(\xi)\Big)'\,d\xi.
\end{align*}
The first term is clearly bounded by $|\lambda_k|^{-1}\|h\|_{H^{\infty}(\Omega)}$, 
and the required bound for the integration term follows
from the estimate
\begin{equation*}
\bigg|\Big(e^{-(\xi - \zeta_k)^2} h(\xi)\Big)'\bigg| \lesssim (1 + |\xi - \zeta_k|)e^{-(\xi - \zeta_k)^2}\|h\|_{H^{\infty}(\Omega)}.
\end{equation*}
This estimate is a consequence of the inequality $|h'(\xi)|\lesssim \|h\|_{H^\infty(\Omega)}$,
$\xi\in (-\infty,0]$,
which follows
 from the Cauchy integral formula for the derivative.
\end{proof}

\medskip

We end the paper with an example related to the proof of Theorem \ref{oper}.  That proof
would have been easier if  diam$_I\, \vf(\bD)=\infty$ implied that there is a radius $[0,e^{i\theta})$ of $\bD$
such that $\vf( [0,e^{i\theta}))$ is not rectifiable.
Here is an example that shows this may not be the case.

\begin{ex}\label{examp}
Form the domain $$O= U\setminus\bigcup_{k=2}^\infty \ell_k,$$ from
the domain $U= \{x+iy\,:\, 0<x<\infty,\,0<y<e^{-x}\}$ with the rays
$\ell_k=\{x+ie^{-k}/2\,:\, 2\le x<\infty\}$ removed, and let $\vf:\bD\to O$ be a
Riemann map with $\vf(0)=z_0=1+ie^{-1}/2$.
Then the image under $\vf$ of every radius of $\bD$
is rectifiable, but diam$_I \,O=\infty$.  Furthermore, $O$ may be modified to obtain a bounded
domain with the same properties.
\end{ex}

\begin{proof}
That diam$_I\, O=\infty$ is clear.  Now consider any of the degenerate prime ends of $O$,
i.e. any prime end that corresponds to a single point $Q$ on the boundary of $O$.  Clearly there is a rectifiable
curve in $\gamma_Q\subset O$ connecting $Q$ to the point $z_0$ and with length $\Lambda_1(\gamma_Q)< 2+|Q|$.
Let $e^{i\theta}$ be the point on the unit circle that corresponds to $Q$ under the map $\vf$.
By a theorem of Gehring and Hayman (see \cite{GH}, or \cite[Theorem 4.20]{Pom}), there is an absolute constant
$K$ such that
$\Lambda_1(\vf[0,e^{i\theta}))\le K   \Lambda_1(\gamma_Q)< K(2+|Q|)<\infty$.  There is only one  prime
end left to consider, the one with impression $[2,\infty)$ and principal point $P=2$.  Then, for the point $e^{i\eta}$
corresponding to this prime end under the map $\vf$, we have
$$
\lim_{r\to 1}\vf(re^{i\eta})=P,
$$
see \cite[Theorem 2.16]{Pom}, and once again the theorem of Gehring and Hayman tells us that
$\Lambda_1(\vf[0,e^{i\eta}))<\infty$.

We now modify $O$ to obtain a bounded domain with the same properties. First form the domain
$$
\widetilde O = \{x+i(y+e^{-x})\,:\, x+iy\in O\},
$$
and let $\widetilde\vf$ be a Riemann map with $\widetilde\vf(0)=z_0=1+i3e^{-1}/2$.  Next, notice that the restriction
of the function $e^{iz}$ to $\widetilde O$ is univalent.    Indeed, if
$e^{iz_1}=e^{iz_2}$, then $z_2= z_1+2n\pi$ for some integer $n$.  We need to show that
at most one of these points can
be in $\widetilde O$.  So assume that $n>0$ and $z_1= x+iy\in \widetilde O$.  Then
$
e^{-x}<y<2e^{-x},
$
and hence
$
y>2e^{-(x+2n\pi)}.
$
This means that $z_2\notin \widetilde O$, which completes that demonstration that $e^{iz}$ is univalent on
$\widetilde O$.

Applying to $\widetilde\vf$ the
 analysis that was just applied to $\vf$ on $O$ now
shows that  the image under $\widetilde\vf$ of every radius of $\bD$
is rectifiable.  This is preserved under composition with the map $e^{iz}$, and hence
the Riemann map $e^{i\widetilde\vf}$ from $\bD$ to the bounded
 domain $\exp(i\widetilde O)$ that spirals out to the unit circle is the example we are looking for.
\end{proof}

\end{document}